\documentclass[12pt,oneside]{amsart}
\usepackage{amssymb,verbatim,enumerate,ifthen}
\usepackage{cite}
\usepackage[mathscr]{eucal}
\usepackage[utf8]{inputenc}
\usepackage[T1]{fontenc}
\usepackage{esint}
\usepackage[marginparwidth=2.4cm]{geometry}
\textheight=22cm
\textwidth=16cm

\newtheorem{thm}{Theorem}[section]
\newtheorem{cor}[thm]{Corollary}

\newtheorem{lem}[thm]{Lemma}

\theoremstyle{definition}
\newtheorem{defin}[thm]{Definition}



\numberwithin{equation}{section}
\def\eq#1{{\rm(\ref{#1})}}
\def\Eq#1#2{\ifthenelse{\equal{#1}{*}}
  {\begin{equation*}\begin{aligned}[]#2\end{aligned}\end{equation*}}
  {\begin{equation}\begin{aligned}[]\label{#1}#2\end{aligned}\end{equation}}}


\def\A{\mathscr{A}}

\def\M{\mathscr{M}}
\def\Nm{\mathscr{N}}
\def\P{\mathscr{P}}
\def\Cts{\mathcal{C}^{2\#}}
\def\C{\mathcal{C}}
\def\S{\mathcal{S}}
\def\calM{\mathcal{M}}
\def\calQ{\mathcal{Q}}

\newcommand\R{\mathbb{R}}

\newcommand\N{\mathbb{N}}

\newcommand\Cyc{\textrm{Cyc}}
\newcommand{\QA}[1]{\A_{#1}}

\DeclareMathOperator{\conv}{conv}
\DeclareMathOperator{\conc}{conc}

\title
{On the Jensen convex and Jensen concave envelopes of means}

\author{Zsolt P\'ales}
\address{Institute of Mathematics, University of Debrecen, Pf.\ 400, 4002 Debrecen, Hungary}
\email{pales@science.unideb.hu}

\author{Pawe\l{} Pasteczka}
\address{Institute of Mathematics, Pedagogical University of Krak\'ow,  Podchor\k{a}\.{z}ych str 2, 30-084 Krak\'ow, Poland}
\email{pawel.pasteczka@up.krakow.pl}

\thanks{The research of the first author was supported by the K-134191 NKFIH Grant and by the 2019-2.1.11-T\'ET-2019-00049, EFOP-3.6.1-16-2016-00022, EFOP-3.6.2-16-2017-00015 projects. The last two projects are co-financed by the European Union and the European Social Fund.}

\keywords{Mean; Quasiarithmetic mean; Jensen convexity; Jensen concavity}

\subjclass[2010]{26D15, 26E60, 39B62}

\begin{document}
\begin{abstract}
In recent papers the convexity of quasiarithmetic means was characterized under twice differentiability assumptions. One of the main goals of this paper is to show that the convexity or concavity of a quasiarithmetic mean implies the the twice continuous differentiability of its generator. As a consequence of this result, we can characterize those quasiarithmetic means which admit a lower convex and upper concave quasiarithmetic envelop.
\end{abstract}
\maketitle

\section{Introduction}

Jensen convex and Jensen concave means are two narrow families which play an important role in the investigation of inequalities involving means, especially the Ingham--Jessen property. Recall that two means $\M \colon I^m \to I$ and $\Nm \colon I^n \to I$ (where $I$ stands for an arbitrary interval) form an Ingham--Jessen pair if 
\Eq{*}{
&\Nm \Big( \M(x_{11},x_{12},\dots,x_{1m}), \M(x_{21},x_{22},\dots,x_{2m}),\dots,\M(x_{n1},x_{n2},\dots,x_{nm}) \Big) \\
&\qquad\le \M \Big( \Nm(x_{11},x_{21},\dots,x_{n1}), \Nm(x_{12},x_{22},\dots,x_{n2}),\dots,\Nm(x_{1m},x_{2m},\dots,x_{nm}) \Big)
}
for every matrix $x \in I^{n\times m}$. Whenever $\M,\,\Nm\colon \bigcup_{n=1}^\infty I^n\to I$ are both symmetric and repetition-invariant means such that $(\M,\Nm)$ is an Ingham--Jessen pair (for all $m,\,n\in\N$) then we can derive several interesting inequalities, among others the mixed-means inequality \cite{CarMeaNel71,Sad06} and the Kedlaya inequality \cite{Ked94} (see also \cite{PalPas16})
\Eq{*}{
\Nm \Big( x_1, \M(x_1,x_2),\dots,\M(x_1,x_2,\dots,x_n) \Big) 
\le \M \Big( x_1,\Nm(x_1,x_2), \dots,\Nm(x_1,x_2,\dots,x_n) \Big)
}
which is valid for all $n \in\N$ and $x \in I^n$; see also the recent paper \cite{ChuPalPas19} for more examples.

In the simplest case when one of the means is the arithmetic mean (from now on denoted by $\A$), we easily obtain: 
\begin{enumerate}[(i)]
 \item $(\M,\A)$ is an Ingham--Jessen pair if and only if $\M$ is Jensen concave;
 \item $(\A,\Nm)$ is an Ingham--Jessen pair if and only if $\Nm$ is Jensen convex.
\end{enumerate}
Let us stress that, due to Bernstein--Doetsch theorem \cite{BerDoe15}, in the family of means Jensen convexity and Jensen convexity coincide with convexity and concavity, respectively.

Following the ideas of convex embeddings (hulls, cones and so on) there arises a natural problem: How can we associate a convex (or concave) mean to a given one? A quite similar and comprehensive study related to the homogeneity axiom has been presented recently by the authors \cite{PalPas19a}. 

The rest of this paper is split into two parts -- we consider convex and concave envelopes in the abstract setting (section~\ref{sec:aa}) and in the quasiarithmetic setting (section~\ref{sec:qa}). 

\medskip
Let us now recall several elementary facts for the family of quasiarithmetic means.
This family was axiomatized in 1930s \cite{Kol30,Nag30,Def31}. For a continuous and strictly monotone function $f \colon I \to \R$, we define the \emph{quasiarithmetic mean} $\QA{f} \colon \bigcup_{n=1}^\infty I^n \to I$ by
\Eq{*}{
\QA{f}(a):=f^{-1} \bigg( \frac{f(a_1)+\cdots+f(a_n)}n \bigg)\qquad \text{ where }n \in \N \text{ and }a=(a_1,\dots,a_n) \in I^n.
}
The family of all quasiarithmetic means on $I$ will be denoted by $\calQ(I)$. It was Knopp \cite{Kno28} who noticed that for $I=\R_+$ and $\pi_p(x):=x^p$ ($p\ne 0$) and $\pi_0(x):=\ln x$, the quasiarithmetic mean $\QA{\pi_p}$ coincides with the $p$-th power mean $\P_p$.

An important subclass of this family (which contains power means) consists of the means which are generated by $\C^2$ functions with a nowhere vanishing first derivative -- this family of generating functions is denoted by $\Cts$ or, more frequently, $\Cts(I)$ if it is necessary to emphasize the domain. Indeed, in view of Jensen's inequality one can easily show that for $f,g \in \Cts$ the comparison inequality $\QA{f}\le\QA{g}$ is equivalent to the inequality $\tfrac{f''}{f'}\le\tfrac{g''}{g'}$ which is the comparability of two single-variable functions. For a detailed discussion concerning this relationship, we refer the reader to the paper by Mikusi\'nski \cite{Mik48}. The operator $f\mapsto\tfrac{f''}{f'}$ was used in several contexts by Pasteczka \cite{Pas13,Pas15d,Pas18c,Pas20a}.

There are few approaches to convexity (or concavity) within this family. First, in late 1980s P\'ales \cite{Pal88a} characterized the convexity of so-called quasideviation means (this family contains quasiarithmetic means). Later, the convexity of quasiarithmetic means was characterized by the authors \cite{PalPas18a} under the assumption that the mean is generated by a function from $\Cts$. The purpose of this paper is to prove that whenever a quasiarithmetic mean is convex (or concave) then its generator must belong to the class $\Cts$ -- see Theorem~\ref{thm:QAconvex} below. An extensive discussion concerning convexity and concavity in the weighted setting has been given recently by Chudziak et al. \cite{ChuGlaJarJar19}.

\section{\label{sec:aa} Abstract approach to envelopes}

In this section we prove a few preliminary results concerning convex and concave envelopes of means. Let us first introduce a formal definition of these operators.
\begin{defin}
Let $\S$ be a set of real-valued functions which are defined on a convex set $D$ of a linear space $X$. For a given function $f \colon D \to I$, we define its \emph{$\S$-convex (resp. $\S$-concave) envelopes} $\conv_\S(f)\colon D \to [-\infty,+\infty)$ and $\conc_\S(f) \colon D \to I \cup (-\infty,+\infty]$ by
\Eq{*}{
\conv_\S(f)(x)&:=\sup \{g(x) \colon g \in \S, \ g \text{ is a Jensen convex function, and }g \le f \},\\
\conc_\S(f)(x)&:=\inf \{g(x) \colon g \in \S,\, g \text{ is a Jensen concave function, and }g \ge f \}.
}
\end{defin}

Let us emphasize a few simple however important remarks. First, $\conv_\S(f)$ (resp. $\conc_\S(f)$) is either finite everywhere or $\conv_\S(f)\equiv-\infty$ (resp. $\conc_\S(f)\equiv+\infty$). 
Second, $\conv_\S(f)$ is a Jensen convex function (unless $\conv_\S(f) \equiv -\infty$) and $\conc_\S(f)$ is a Jensen concave function (unless $\conc_\S(f) \equiv +\infty$). 
Third, these operators are monotone function of both $f$ (with the pointwise ordering) and $\S$ (with the inclusion ordering). Forth, for every $f$ and $\S$ like above the inequality $\conv_\S(f) \le f \le \conc_\S(f)$ holds.
Fifth, a function $f \in \S$ is Jensen convex (resp.\ Jensen concave) if and only if $\conv_\S(f)= f$ (resp. $\conc_\S(f)= f$). 

\subsection{Evelopes in a family of means}

Throughout this section let $n \in \N$ and let $I \subset \R$ be an interval. An \emph{$n$-variable mean on $I$} is a function $\M \colon I^n \to I$ such that
\Eq{*}{
\min(x_1,\dots,x_n)\le \M(x_1,\dots,x_n) \le \max(x_1,\dots,x_n) \quad\text{ for all }x_1,\dots,x_n \in I.
}
Family of all such functions will be denoted by $\calM_n(I)$. From now on $\A_n$ stands for the $n$-variable arithmetic mean (on $I$).

We say that a family $\S \subseteq \calM_n(I)$ is \emph{permutation-closed} if, for every $\M \in \S$ and every permutation $\sigma$ of $\{1,\dots,n\}$, the mean $\M_\sigma\colon I^n \to I$ given by
\Eq{*}{
\M_{\sigma}(x_1,\dots,x_n):= \M(x_{\sigma(1)},\dots,x_{\sigma(n)})
\quad (x_1,\dots,x_n \in I)}
also belongs to $\S$. We say that $\M\colon I^n\to I$ is \emph{cyclically symmetric} (resp.\ \emph{symmetric}) if $\M=\M_\sigma$ for all cyclic permutation (resp.\ for all permutation) $\sigma$ of $\{1,\dots,n\}$. 

\begin{thm}\label{thm:M-A}
Let $\S\subseteq\calM_n(I)$ with $\A_n \in \S$. A cyclically symmetric mean $\M \in \S$ admits a Jensen convex (resp. Jensen concave) envelope in $\S$ if and only if $\M \ge \A_n$ (resp. $\M \le \A_n$). 
\end{thm}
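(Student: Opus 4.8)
The plan is to prove the two statements (convex envelope exists $\iff$ $\M \ge \A_n$, and concave envelope exists $\iff$ $\M \le \A_n$) separately; by the obvious symmetry (replacing $\M$ by a reflected mean, or simply running the dual argument) it suffices to treat the convex case, so I would focus on showing that a cyclically symmetric $\M \in \S$ admits a Jensen convex envelope in $\S$ precisely when $\M \ge \A_n$.

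For the ``only if'' direction, suppose $\conv_\S(\M)$ is not identically $-\infty$, so it is a genuine Jensen convex mean (it is a mean because it lies between $\min$ and $\max$, being squeezed between some $g \in \S$ and $\M$, hence between $\min$ and $\max$). I would exploit cyclic symmetry: since $\M_\sigma = \M$ for every cyclic permutation $\sigma$, and since the envelope operator is built from the order structure alone, $\conv_\S(\M)_\sigma = \conv_\S(\M_\sigma) = \conv_\S(\M)$ for every cyclic $\sigma$ — here I use that $\S$ is permutation-closed so that $g \mapsto g_\sigma$ maps the competitor set for $\M$ bijectively onto the competitor set for $\M_\sigma$. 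Thus $g_0 := \conv_\S(\M)$ is a cyclically symmetric, Jensen convex mean with $g_0 \le \M$. Now evaluate at an arbitrary point $x = (x_1,\dots,x_n)$ and average $g_0$ over the $n$ cyclic shifts of $x$: by cyclic symmetry each term equals $g_0(x)$, while Jensen convexity of $g_0$ gives $g_0(x) = \frac1n\sum_{k=0}^{n-1} g_0(\text{shift}^k x) \ge g_0\big(\frac1n\sum_{k=0}^{n-1}\text{shift}^k x\big)$, and $\frac1n\sum_{k=0}^{n-1}\text{shift}^k x$ is the constant vector all of whose entries equal $\A_n(x)$, on which any mean returns $\A_n(x)$. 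Hence $\M(x) \ge g_0(x) \ge \A_n(x)$, which is exactly $\M \ge \A_n$.

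For the ``if'' direction, assume $\M \ge \A_n$; I claim $\A_n$ itself is an admissible competitor, so $\conv_\S(\M) \ge \A_n > -\infty$ and the envelope exists. Indeed $\A_n \in \S$ by hypothesis, $\A_n$ is Jensen convex (it is affine), and $\A_n \le \M$ by assumption — so $\A_n$ appears in the supremum defining $\conv_\S(\M)(x)$, giving $\conv_\S(\M)(x) \ge \A_n(x)$ for all $x$ and in particular the envelope is finite everywhere. This direction is essentially immediate; the substance is all in the converse.

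I expect the main obstacle to be the bookkeeping around the averaging argument in the ``only if'' direction — specifically, verifying cleanly that $\conv_\S(\M)$ inherits cyclic symmetry from $\M$ (which is where permutation-closedness of $\S$ is genuinely used and where one must be careful that the \emph{Jensen convexity} constraint is also preserved under $g \mapsto g_\sigma$, since $g_\sigma$ is Jensen convex whenever $g$ is), and confirming that the cyclic average of the shifts of $x$ is the constant vector with entries $\A_n(x)$ so that applying the mean $g_0$ to it returns $\A_n(x)$ exactly. Once cyclic symmetry of the envelope is in hand, the Jensen inequality applied to the $n$ cyclic shifts closes the argument in one line. The dual (concave) case follows by replacing suprema with infima, Jensen convex with Jensen concave, and reversing the inequality, using that $\A_n$ is simultaneously Jensen convex and Jensen concave.
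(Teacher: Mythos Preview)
Your overall strategy---the cyclic-averaging trick combined with Jensen's inequality on the convex minorant, and the observation that $\A_n$ itself witnesses the envelope in the ``if'' direction---matches the paper's. However, in the ``only if'' direction you invoke permutation-closedness of $\S$ to argue that $g_0=\conv_\S(\M)$ inherits cyclic symmetry from $\M$, and this hypothesis is \emph{not} part of the theorem (only $\A_n\in\S$ is assumed). As stated, your argument therefore has a gap.

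The paper sidesteps this entirely: rather than working with the envelope, it takes any Jensen convex $\Nm\in\S$ with $\Nm\le\M$ (such $\Nm$ exists precisely because the envelope is finite) and applies cyclic symmetry to $\M$, not to $\Nm$. Concretely, for each cyclic permutation $\sigma$ one has $\M(x)=\M(x_\sigma)\ge\Nm(x_\sigma)$; averaging over the $n$ cyclic shifts and then applying Jensen convexity of $\Nm$ to the right-hand side yields $\M(x)\ge\Nm(\bar x,\dots,\bar x)=\bar x=\A_n(x)$, where $\bar x$ is the arithmetic mean of the coordinates. No symmetry of $\Nm$ (or of the envelope) is required, and hence no assumption on $\S$ beyond $\A_n\in\S$. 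Your proof can be repaired in exactly this way: replace the line ``by cyclic symmetry each term equals $g_0(x)$'' with ``by cyclic symmetry of $\M$, each $\M(\mathrm{shift}^k x)$ equals $\M(x)$, and dominates $g_0(\mathrm{shift}^k x)$'', and the rest goes through unchanged.
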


\begin{proof} 
 First assume that there exists a Jensen convex mean $\Nm \in \S$ such that $\Nm \le \M$. 
 If we apply this inequality to all cyclic permutations of a fixed vector $x=(x_1,\dots,x_n)\in I^n$, we obtain
 \Eq{*}{
\frac1n  \sum_{\sigma \in \Cyc_n} \M(x_{\sigma(1)},\dots,x_{\sigma(n)})
 &\ge \frac1n \sum_{\sigma \in \Cyc_n} \Nm(x_{\sigma(1)},\dots,x_{\sigma(n)})
 }
As $\M$ is cyclically symmetric, the left hand side of the above inequality equals $\M(x)$. On the other hand, by the Jensen convexity of $\Nm$, we have
\Eq{*}{
\frac1n \sum_{\sigma \in \Cyc_n} \Nm(x_{\sigma(1)},\dots,x_{\sigma(n)})
&\ge \Nm\bigg(\frac1n \sum_{\sigma \in \Cyc_n} x_{\sigma(1)},\dots,\frac1n \sum_{\sigma \in \Cyc_n} x_{\sigma(n)}\bigg)\\
&= \Nm\bigg(\frac{x_1+\dots+x_n}n,\dots,\frac{x_1+\dots+x_n}n\bigg)=\frac{x_1+\dots+x_n}n.
}
This implies $\M(x)\ge\A_n(x)$. As $x$ was taken arbitrarily, we have $\M\ge\A_n$.

Conversely, if $\M  \ge \A_n$ then, as $\A_n$ is Jensen convex, we have $\M \ge \conv_\S(\M)\ge \A_n$. 

The proof of the second (i.e. concave) counterpart of this theorem is analogous.
\end{proof}

\begin{lem}
Let $\S \subset \calM_n(I)$ be a permutation-closed family.
 If $\M \in \S$ is symmetric with respect to some permutation $\sigma$ (that is $\M_\sigma=\M$) then so are $\conv_\S(\M)$ and $\conc_\S(\M)$.
\end{lem}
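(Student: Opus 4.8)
The plan is to show that the family of competitors defining $\conv_\S(\M)$ is left invariant by the substitution $g\mapsto g_\sigma$, so that forming the supremum commutes with the permutation $\sigma$. To this end set
\Eq{*}{
T:=\{g\in\S:\ g\text{ is Jensen convex and }g\le\M\},
}
so that $\conv_\S(\M)(x)=\sup\{g(x):g\in T\}$ for every $x\in I^n$, with the convention $\sup\emptyset=-\infty$ (this convention also disposes of the degenerate case $\conv_\S(\M)\equiv-\infty$, in which there is nothing to prove).

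First I would record the elementary observation that, for any permutation $\tau$ of $\{1,\dots,n\}$, the coordinate map $x\mapsto(x_{\tau(1)},\dots,x_{\tau(n)})$ is the restriction to $I^n$ of a linear automorphism of $\R^n$; in particular it sends the midpoint of two points to the midpoint of their images, and hence $h_\tau$ is Jensen convex (resp.\ Jensen concave) whenever $h$ is. Next I would verify that $g\mapsto g_\sigma$ maps $T$ into $T$: if $g\in T$, then $g_\sigma\in\S$ because $\S$ is permutation-closed, $g_\sigma$ is Jensen convex by the previous remark, and $g\le\M$ gives $g_\sigma\le\M_\sigma=\M$, so indeed $g_\sigma\in T$. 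Since the stabilizer $\{\tau:\M_\tau=\M\}$ is a subgroup of the symmetric group, we also have $\M_{\sigma^{-1}}=\M$, and therefore the very same argument shows that $g\mapsto g_{\sigma^{-1}}$ maps $T$ into $T$ as well; as these two maps are mutually inverse, each of them is a bijection of $T$ onto $T$.

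Granting this, the claim follows at once: for every $x\in I^n$ one has
\Eq{*}{
\big(\conv_\S(\M)\big)_\sigma(x)
&=\sup_{g\in T}g\big(x_{\sigma(1)},\dots,x_{\sigma(n)}\big)
=\sup_{g\in T}g_\sigma(x)\\
&=\sup_{h\in T}h(x)=\conv_\S(\M)(x),
}
where the third equality is exactly the surjectivity of $g\mapsto g_\sigma$ on $T$. Hence $\big(\conv_\S(\M)\big)_\sigma=\conv_\S(\M)$, and running the same argument with suprema replaced by infima, Jensen convexity by Jensen concavity, and the inequalities $\le$ reversed yields $\big(\conc_\S(\M)\big)_\sigma=\conc_\S(\M)$. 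I do not expect any genuine obstacle here; the only points deserving (minor) care are that coordinate permutations preserve Jensen convexity and that $g\mapsto g_\sigma$ is \emph{onto} $T$ and not merely \emph{into} it — dropping the latter would leave us with only one of the inequalities $\big(\conv_\S(\M)\big)_\sigma\le\conv_\S(\M)$ and $\big(\conv_\S(\M)\big)_\sigma\ge\conv_\S(\M)$.
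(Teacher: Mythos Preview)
Your proof is correct and follows essentially the same route as the paper: both use that coordinate permutations preserve Jensen convexity, that $\M_\sigma=\M$ forces $g_\sigma\le\M$ whenever $g\le\M$, and then invoke $\sigma^{-1}$ to obtain the reverse inequality. The only difference is packaging: the paper argues directly that $(\conv_\S(\M))_\sigma$ is a Jensen convex minorant of $\M$ and hence bounded above by $\conv_\S(\M)$, whereas you work at the level of the competitor set $T$ and show $g\mapsto g_\sigma$ is a bijection of $T$; your version is in fact slightly more careful, since it sidesteps the question of whether $(\conv_\S(\M))_\sigma$ itself belongs to $\S$.
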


\begin{proof}
One can easily show that $(\conv_\S(\M))_\sigma$ is Jensen convex and $(\conv_\S(\M))_\sigma\le \M_\sigma=\M$. This implies $(\conv_\S(\M))_\sigma  \le \conv_\S(\M)$. 
 
On the other hand, $\M$ is symmetric with respect to the inverse permutation $\sigma^{-1}$ as well. Thus $(\conv_\S(\M))_{\sigma^{-1}} \le \conv_\S(\M)$, which directly implies the inequality $\conv_\S(\M) \le (\conv_\S(\M))_\sigma$. Therefore $\conv_\S(\M)=(\conv_\S(\M))_\sigma$, and $\conv_\S(\M)$ is symmetric with respect to $\sigma$. The proof for $\conc_\S(\M)$ is analogous.
 \end{proof}

\subsection{Reflected means}
Let us recall the notion of reflected means from the paper \cite{PalPas18b}. Let $I$ be and interval and $\M \colon I^n \to I$ be a mean. We define the \emph{reflected mean} of $\M$ as the function $\widehat\M \colon (-I)^n \to (-I)$ given by 
\Eq{*}{
\widehat\M(x_1,\dots,x_n)=-\M(-x_1,\dots,-x_n).
}
Then it is easy to check that $\M$ is convex (or concave) if and only if $\widehat\M$ is concave (or convex), respectively. Moreover $\M \le \Nm$ if and only if $\widehat\M \ge \widehat\Nm$.

For a family of means $\S$, we define its reflected family by $\widehat\S:=\{\widehat\M \colon \M \in \S\}$. 
Let us emphasize that for most of families of means such reflection preserves the family (but reflects the interval). 
For example reflected quasiarithmetic means on $I$ are exactly quasiarithmetic means on $-I$. The same is valid for deviation and quasideviation means. This duality swaps Jensen concave and Jensen convex envelopes. 

\begin{lem}\label{lem:reflected}
 Let $\S$ be a family of means on $I$ and $\M \in \S$. Then $\widehat{\conc_\S(\M)}=\conv_{\widehat\S}(\widehat\M)$.
\end{lem}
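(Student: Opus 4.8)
The plan is to unwind both sides of the claimed identity through the definition of the envelope operators and the reflection map, and to verify that the reflection establishes an order-reversing bijection between the two families of competitors appearing in the two infima/suprema. First I would recall the two basic facts about reflection noted just above the statement: a mean $\Nm$ is Jensen concave if and only if $\widehat\Nm$ is Jensen convex, and $\Nm \ge \M$ if and only if $\widehat\Nm \le \widehat\M$. I would also use that $\Nm \mapsto \widehat\Nm$ is an involution (so $\widehat{\widehat\Nm}=\Nm$) and that by definition $\widehat\Nm \in \widehat\S$ precisely when $\Nm \in \S$; consequently $\Nm \mapsto \widehat\Nm$ maps the set
\Eq{*}{
\big\{\Nm \in \S \colon \Nm \text{ is Jensen concave and } \Nm \ge \M\big\}
}
bijectively onto the set
\Eq{*}{
\big\{\G \in \widehat\S \colon \G \text{ is Jensen convex and } \G \le \widehat\M\big\}.
}

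Next I would perform the pointwise computation. Fix $x=(x_1,\dots,x_n)\in(-I)^n$, so that $-x:=(-x_1,\dots,-x_n)\in I^n$. By definition of the reflected function,
\Eq{*}{
\widehat{\conc_\S(\M)}(x)=-\conc_\S(\M)(-x)=-\inf\big\{\Nm(-x)\colon \Nm\in\S,\ \Nm \text{ Jensen concave},\ \Nm \ge \M\big\}.
}
Pulling the minus sign inside turns the infimum into a supremum:
\Eq{*}{
\widehat{\conc_\S(\M)}(x)=\sup\big\{-\Nm(-x)\colon \Nm\in\S,\ \Nm \text{ Jensen concave},\ \Nm \ge \M\big\}=\sup\big\{\widehat\Nm(x)\colon \dots\big\}.
}
Now I substitute $\G=\widehat\Nm$ and invoke the bijection from the previous paragraph together with the two reflection facts: the conditions ``$\Nm\in\S$, $\Nm$ Jensen concave, $\Nm\ge\M$'' are equivalent to ``$\G\in\widehat\S$, $\G$ Jensen convex, $\G\le\widehat\M$''. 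Hence the last supremum equals
\Eq{*}{
\sup\big\{\G(x)\colon \G\in\widehat\S,\ \G \text{ Jensen convex},\ \G\le\widehat\M\big\}=\conv_{\widehat\S}(\widehat\M)(x),
}
which is exactly the right-hand side evaluated at $x$. Since $x\in(-I)^n$ was arbitrary, the identity $\widehat{\conc_\S(\M)}=\conv_{\widehat\S}(\widehat\M)$ follows.

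The only genuinely delicate point is a bookkeeping one: the two envelope operators have slightly asymmetric codomains (one may take the value $-\infty$, the other $+\infty$), so I would check separately the degenerate case in which no admissible competitor exists. In that case the infimum defining $\conc_\S(\M)$ is $+\infty$ everywhere, whence $\widehat{\conc_\S(\M)}\equiv-\infty$; simultaneously the competitor set for $\conv_{\widehat\S}(\widehat\M)$ is empty by the same bijection, so $\conv_{\widehat\S}(\widehat\M)\equiv-\infty$, and both sides agree. Apart from this remark everything reduces to the elementary identity $-\inf(-A)=\sup A$ for a set $A\subseteq[-\infty,+\infty]$ and to the involutivity of the reflection, so I expect no real obstacle; the proof is essentially a change of variables in the extremal problem.
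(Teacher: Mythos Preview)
Your proposal is correct and follows essentially the same route as the paper: a direct pointwise computation that expands $-\conc_\S(\M)(-x)$ via the definition, turns $-\inf$ into $\sup$, and then re-indexes the competitor set through the reflection bijection. If anything, your version is a touch more careful than the paper's, since you fix $x\in(-I)^n$ rather than $I^n$ and you explicitly address the degenerate empty-competitor case.
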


\begin{proof}
 Indeed, for fixed $n \in \N$ and $x \in I^n$ one has 
 \Eq{*}{
 \widehat{\conc_\S(\M)}(x)&=-\conc_\S(\M)(-x)\\
 &=-\inf \{\Nm(-x) \colon \Nm \in \S,\, \Nm \text{ is Jensen concave and }\Nm \ge \M \}\\
 &=\sup \{\widehat\Nm(x) \colon \Nm \in \S,\, \Nm \text{ is Jensen concave and }\Nm \ge \M \}\\
 &=\sup \{\widehat\Nm(x) \colon \widehat\Nm \in \widehat\S,\, \widehat\Nm \text{ is Jensen convex and }\widehat\Nm \le \widehat\M \}=\conv_{\widehat\S}(\widehat\M)(x),
 }
 which ends the proof.
\end{proof}

\section{Quasiarithmetic means\label{sec:qa}}

In the next result we establish a complete characterization of the convexity of quasiarithmetic means.
 
 \begin{thm}\label{thm:QAconvex}
 Let $I$ be an open interval, $f \colon I \to \R$ be a continuous, strictly monotone function. Then $\QA{f}$ is convex if and only if the following two conditions are valid:
 \begin{enumerate}
  \item $f \in \Cts(I)$;
  \item either $f''$ is nowhere vanishing and $\frac{f'}{f''}$ is positive and concave, or $f'' \equiv 0$.
 \end{enumerate}
 \end{thm}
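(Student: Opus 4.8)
The plan is to prove the two directions separately, with the hard work concentrated in the forward implication, namely that convexity of $\QA{f}$ forces $f\in\Cts(I)$; once that regularity is available, the characterization reduces to the result of \cite{PalPas18a} quoted in the introduction. For the easy direction, suppose (1) and (2) hold. If $f''\equiv 0$ then $f$ is affine, $\QA{f}=\A_n$ on each $I^n$, and the arithmetic mean is convex. If instead $f''$ is nowhere vanishing and $f'/f''$ is positive and concave, then $f\in\Cts(I)$ with $f'$ and $f''$ both nonvanishing, so by \cite{PalPas18a} the convexity of $\QA{f}$ is equivalent to the concavity of $f'/f''$ together with the appropriate sign condition; this is exactly (2), so $\QA{f}$ is convex. (One should also record here that the sign of $f'/f''$ being the relevant one is unaffected by replacing $f$ with $-f$, which leaves $\QA{f}$ unchanged.)

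For the forward implication I would first extract $C^1$-regularity and then $C^2$-regularity of $f$ from the two-variable convexity inequality. Fix the two-variable mean $\QA{f}(x,y)=f^{-1}\!\big(\tfrac{f(x)+f(y)}2\big)$ and write down convexity of the map $(x,y)\mapsto\QA{f}(x,y)$ at the midpoint: for all $x_1,y_1,x_2,y_2\in I$,
\Eq{*}{
f^{-1}\!\Big(\tfrac{f(\tfrac{x_1+x_2}2)+f(\tfrac{y_1+y_2}2)}2\Big)\le\tfrac12\Big(f^{-1}\!\big(\tfrac{f(x_1)+f(y_1)}2\big)+f^{-1}\!\big(\tfrac{f(x_2)+f(y_2)}2\big)\Big).
}
The first step is to derive from this that $f$ is differentiable with $f'$ nowhere zero: along the diagonal $x=y$ the mean equals the identity, so by comparing with nearby off-diagonal points and exploiting that $f$ is monotone and continuous, the convexity inequality yields one-sided estimates on difference quotients of $f$ that pin down a genuine derivative; the nonvanishing of $f'$ follows because $f^{-1}$ must also be locally Lipschitz-controlled from the inequality. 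Having $f\in\C^1$ with $f'\neq 0$, substitute and rearrange to isolate the behaviour of $f'$: taking $x_1=x_2=x$ and letting $y_1,y_2$ vary, or more efficiently differentiating the inequality in the admissible directions where it becomes an equality (the diagonal), one obtains a functional inequality for $f'$ which, by a Bernstein--Doetsch type argument on $f'$ (or on $1/f'$, or on $\log|f'|$), upgrades $f'$ to be locally Lipschitz, hence $f\in\C^{1,1}_{\mathrm{loc}}$, and then a second pass gives that $f''$ exists everywhere and is continuous, i.e. $f\in\Cts(I)$.

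The main obstacle is precisely this bootstrap: turning a single midpoint-convexity inequality for the two-variable function $\QA{f}$ into successive regularity gains for $f$. The difficulty is that the inequality mixes values of $f$ at four points through $f^{-1}$, so one cannot simply "read off" smoothness; the right move is to localize near the diagonal, where $\QA{f}(x,x)=x$ and the inequality degenerates, and to perform a careful Taylor-type expansion controlled purely by monotonicity and continuity. Concretely, I expect to fix a compact subinterval $[a,b]\subset I$, set $x_i=c+s_i$, $y_i=c-s_i$ with small increments, expand using only the already-established regularity, and extract that the "second-order remainder" of $f$ is sign-definite and comparable to a concave function; iterating this over all $c$ gives the everywhere-existence and continuity of $f''$, and simultaneously the concavity of $f'/f''$ (together with its positivity, after normalizing the sign of $f$). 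Once $f\in\Cts(I)$ is in hand, invoking \cite{PalPas18a} closes the forward direction: convexity of $\QA{f}$ then says that $\tfrac{f''}{f'}$ — equivalently $(f'/f'')$ where the latter is defined — satisfies the stated concavity/sign condition, and the dichotomy "$f''$ nowhere vanishing versus $f''\equiv 0$" emerges because a concave positive function $f'/f''$ that vanishes somewhere must vanish identically on a subinterval, which one rules out or translates into the affine case $f''\equiv 0$.
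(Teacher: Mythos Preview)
Your outline correctly identifies that the easy direction and the implication (1)$\Rightarrow$(2) (under convexity) are already in \cite{PalPas18a}, so the real content is the regularity bootstrap. But the bootstrap you describe is not a proof: it is a hope, and it omits the two concrete ingredients that actually make the argument go through in the paper.

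First, you never establish that $f$ itself is convex. The paper's opening move is to invoke Theorem~\ref{thm:M-A}: since $\QA{f}$ is a convex cyclically symmetric mean, $\QA{f}\ge\A$, hence (after replacing $f$ by $-f$ if necessary) $f$ is strictly increasing and convex. This immediately gives you that the one-sided derivatives $f'_-,f'_+$ exist, are positive, and agree off a countable set --- so you are not starting from mere monotone continuity, and your vague plan to ``pin down a genuine derivative'' from off-diagonal comparisons is unnecessary once this is in hand. Second, the paper does not work with the raw midpoint inequality for $\QA{f}$; it uses the characterization from \cite{Pal88a} that convexity of $\QA{f}$ is equivalent to the existence of functions $a,b:I^2\to\R$ with
\[
f\big(\tfrac{x+y}{2}\big)-f\big(\tfrac{u+v}{2}\big)\le a(u,v)\,(f(x)-f(u))+b(u,v)\,(f(y)-f(v))
\]
for all $x,y,u,v\in I$. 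This support-type inequality is what makes the bootstrap concrete: freezing $y=v$ and dividing by $x-u$ sandwiches $a(u,v)$ between $\tfrac{f'_+((u+v)/2)}{2f'_+(u)}$ and $\tfrac{f'_-((u+v)/2)}{2f'_-(u)}$, and choosing $u$ at a differentiability point forces $f'_+=f'_-$ everywhere, so $f\in\C^{1\#}$. With $a,b$ then explicitly determined, the inequality becomes the statement that $F(x,u):=\dfrac{f(x)-f(u)}{f'(u)}$ is convex on $I^2$; convexity of $u\mapsto F(x,u)$ gives one-sided second derivatives of $f$, a subgradient argument forces $f''_-=f''_+$, and differentiable convexity of $F_x$ yields continuity of $f''$. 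None of this is captured by ``Taylor-type expansion near the diagonal'': without the $\QA{f}\ge\A$ step you have no one-sided derivatives to begin with, and without the \cite{Pal88a} reformulation you have no mechanism to isolate $f'$ and then $f''$ from the tangled four-point inequality.
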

\begin{proof}
The implication $(\Leftarrow)$ was already proved by the authors in \cite{PalPas18a}. It was also proved that, under the assumption that $\QA{f}$ is convex, (1) implies (2). Therefore the only remaining part is to show that every convex quasiarithmetic mean is generated by a $\Cts$ function.

As $\QA{f}$ is convex, by Theorem~\ref{thm:M-A}, we obtain $\QA{f} \ge \A$. Thus, by Jensen's inequality and the well-known identity $\QA{f}=\QA{-f}$ one may assume without loss of generality that $f$ is strictly increasing and convex. Then $f$ has strictly positive one-sided derivatives $f_+'$ and $f_-'$ at every point of $I$.

Applying some general results concerning quasideviation means (cf. \cite{Pal88a}), we can obtain that $\QA{f}$ is convex if and only if
there exist $a,\,b \colon I^2 \to \R$ such that
\Eq{E:condC}{
f\big( \tfrac{x+y}2\big)-f\big( \tfrac{u+v}2\big)\le a(u,v) (f(x)-f(u)) + b(u,v) (f(y)-f(v))
}
is valid for all $x,y,u,v \in I$.

For $x>u$ and $y=v$ we obtain
\Eq{*}{
\frac{f\big( \frac{x+v}2\big)-f\big( \frac{u+v}2\big)}{x-u} \le 
a(u,v) \cdot \frac{f(x)-f(u)}{x-u}.
}
Upon taking the limit $x \searrow u$, it follows that  
\Eq{*}{
\tfrac12 \cdot f'_+ \big( \tfrac{u+v}2\big) \le a(u,v) f'_+(u).
}
Therefore
\Eq{*}{
\frac{f'_+ \big( \frac{u+v}2\big)}{2f'_+(u)}\leq a(u,v).
}
Analogously, we obtain
\Eq{*}{
a(u,v) \le \frac{f'_- \big( \frac{u+v}2\big)}{2f'_-(u)},
}
which implies the double inequality
\Eq{E:DI}{
\frac{f'_+ \big( \frac{u+v}2\big)}{2f'_+(u)} \le a(u,v) \le \frac{f'_- \big( \frac{u+v}2\big)}{2f'_-(u)}.
}

Take $p \in I$ arbitrarily. As $f$ is convex, we know that $f'_-(p) \le f'_+(p)$ and $f$ is differentiable everywhere except at countably many points. In particular one can take $u_p \in I$ such that $f$ is differentiable at $u_p$ and $v_p:=2p-u_p \in I$. Then \eq{E:DI} with $(u,v):=(u_p,v_p)$ simplifies to 
\Eq{*}{
\frac{f'_+(p)}{2f'(u_p)} \le \frac{f'_-(p)}{2f'(u_p)}
}
which implies $f'_+(p) \le f'_-(p)$. Consequently, $f$ is differentiable at $p$. As $f$ is convex, we get $f \in \mathcal{C}^{1\#}(I)$ (in particular $f'$, is positive). Then, in view of \eq{E:DI} and the similar inequality for the function $b$, one gets
\Eq{*}{
a(u,v) =\frac{f'\big( \frac{u+v}2\big)}{2f'(u)}, \qquad b(u,v)=\frac{f'\big( \frac{u+v}2\big)}{2f'(v)} \qquad (u,\, v \in I).
}
Now condition \eq{E:condC} can be equivalently rewritten as 
\Eq{E:condC1}{
\frac{f\big( \tfrac{x+y}2\big)-f\big( \tfrac{u+v}2\big)}{f'\big( \frac{u+v}2\big)}\le \frac12 \bigg( \frac{f(x)-f(u)}{f'(u)} + \frac{f(y)-f(v)}{f'(v)}\bigg)\qquad (x,y,u,v \in I).
}
Which implies that the two-variable continuous function $F \colon I^2 \to \R$ given by
\Eq{*}{
F(x,u):=\frac{f(x)-f(u)}{f'(u)}
}
is convex on $I^2$. In particular, for all fixed $x\in I$, the mapping $u\mapsto F_x(u):=F(x,u)$ is convex on $I$. Consequently, $F_x$ is differentiable at every point of $I$ from the left and from the right. However, as $f \in \mathcal{C}^{1\#}(I)$, the mapping $u \mapsto f(x)-f(u)$ is differentiable. Therefore $f'$ is differentiable at every point of $I$ both from the left and from the right. We will denote its one-sided derivatives as $f''_-$ and $f''_+$.

Let $x\in I$ be fixed. By the convexity of $F_x$, for all $v \in I$, there exist a real number $p(x,v)$ such that 
\Eq{*}{
\frac{f(x)-f(u)}{f'(u)} - \frac{f(x)-f(v)}{f'(v)} \ge p(x,v)\cdot(u-v) \qquad\text{for all }u,\,v \in I.
}
Then, for all $u,\,v \in I$,
\Eq{*}{
\frac{(f(x)-f(v))(f'(v)-f'(u))-f'(v)(f(u)-f(v))}{f'(u)f'(v)} \ge p(x,v)\cdot (u-v) 
}
Now assume that $u>v$ and divide by $u-v$ side-by-side. Then we get
\Eq{*}{
\frac{(f(x)-f(v))\frac{f'(v)-f'(u)}{u-v}-f'(v)\frac{f(u)-f(v)}{u-v}}{f'(u)f'(v)} \ge p(x,v).
}
By taking limit $u \searrow v$, we obtain
\Eq{*}{
\frac{(f(v)-f(x))f''_+(v)-f'(v)^2}{f'(v)^2} \ge p(x,v).
}
Repeating the same argumentation for $u<v$, we similarly obtain 
\Eq{*}{
\frac{(f(v)-f(x))f''_-(v)-f'(v)^2}{f'(v)^2} \le p(x,v).
}
The above inequalities then imply that
\Eq{*}{
  (f(v)-f(x))(f''_+(v)-f''_-(v))\ge0 \qquad\text{for all }x,\,v \in I.
}
By taking $x$ to be smaller and bigger than $v$, it follows that $f''_+(v)=f''_-(v)$, which proves the differentiability of $f'$ at $v$.

The remaining part is to show that $f''$ is continuous. However as $F_x$ is convex and differentiable we know that $F_x'$ is continuous and the continuity of $f''$ is straightforward.
\end{proof}

\begin{thm}
Let $I$ be an interval, $f \in \Cts(I)$ be a strictly increasing and convex function. Then $\conv_{\calQ(I)}(\QA{f})=\QA{g}$ for some $g \in \Cts(I)$. 

Moreover either $g''\equiv 0$ (and $\QA{g}$ is the arithmetic mean) or $g''$ is nowhere vanishing and $\frac{g'}{g''}=\conc_{\mathcal{C}(I)}\big(\frac{f'}{f''}\big)$.
\end{thm}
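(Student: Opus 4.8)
The plan is to translate the whole statement, via the comparison theorem $\QA{h}\le\QA{f}\iff\frac{h''}{h'}\le\frac{f''}{f'}$ (valid for $f,h\in\Cts$) and Theorem~\ref{thm:QAconvex}, into a one--variable question about concave envelopes. First, by the Bernstein--Doetsch theorem a Jensen convex mean is convex, so
\Eq{*}{
\conv_{\calQ(I)}(\QA{f})=\sup\{\QA{h}\colon h\in\calQ(I),\ \QA{h}\text{ is convex},\ \QA{h}\le\QA{f}\},
}
the supremum being taken pointwise and, for the moment, in a fixed number of variables. By Theorem~\ref{thm:QAconvex}, $\QA{h}$ is convex exactly when $h\in\Cts(I)$ and either $h''\equiv0$ (then $\QA{h}=\A$), or---after replacing $h$ by $\pm h$ so that $h'>0$---the function $\psi_h:=\frac{h'}{h''}$ is positive and concave. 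Since $f$ is increasing and convex, $\frac{f'}{f''}$ takes values in $(0,+\infty]$, and for such an $h$ with $h''$ nowhere vanishing the inequality $\QA{h}\le\QA{f}$ becomes $\frac{h''}{h'}\le\frac{f''}{f'}$, i.e. $\psi_h\ge\frac{f'}{f''}$; in particular, if $f''$ vanishes somewhere then $\A$ is the only convex quasiarithmetic mean below $\QA{f}$.

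Next I would put $\varphi:=\conc_{\mathcal{C}(I)}\big(\frac{f'}{f''}\big)$. By the remarks following the definition of the envelopes, together with the automatic continuity of finite concave functions, $\varphi$ is either identically $+\infty$ or a finite, continuous, concave function, and it is positive since $\varphi\ge\frac{f'}{f''}>0$. If $\varphi\equiv+\infty$, there is no positive concave majorant of $\frac{f'}{f''}$, hence no convex $\QA{h}\le\QA{f}$ except $\A$, and the envelope is $\A=\QA{\id}$; here one may take $g:=\id$, which satisfies $g''\equiv0$, landing in the first branch of the dichotomy.

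In the remaining case $\varphi$ is finite, continuous and positive, and I would define $g$ on $I$ by
\Eq{*}{
g'(x):=\exp\Big(\int_{x_0}^x\frac{dt}{\varphi(t)}\Big),\qquad g(x):=\int_{x_0}^x g'(s)\,ds\qquad(x_0\in I\text{ fixed}).
}
Then $g\in\Cts(I)$, $g'>0$, $g''=g'/\varphi$ is positive and nowhere vanishing, and $\frac{g'}{g''}=\varphi$ is positive and concave, so $\QA{g}$ is convex by Theorem~\ref{thm:QAconvex}; also $\frac{g''}{g'}=\frac1\varphi\le\frac{f''}{f'}$ gives $\QA{g}\le\QA{f}$, so $\QA{g}$ is one of the competitors in the supremum. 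It remains to see that $\QA{g}$ dominates every competitor: if $\QA{h}\le\QA{f}$ is convex with $h''\equiv0$ then $\QA{h}=\A\le\QA{g}$ because $g$ is convex; otherwise $\psi_h$ is a continuous positive concave majorant of $\frac{f'}{f''}$, hence $\psi_h\ge\varphi=\frac{g'}{g''}$, i.e. $\frac{h''}{h'}\le\frac{g''}{g'}$, so $\QA{h}\le\QA{g}$. Thus the supremum is attained and equals $\QA{g}$, giving $\conv_{\calQ(I)}(\QA{f})=\QA{g}$ with $\frac{g'}{g''}=\conc_{\mathcal{C}(I)}\big(\frac{f'}{f''}\big)$; since none of these arguments depends on the number of variables, the same $g$ works for every arity.

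I expect the only real obstacle to be the careful handling of the two degenerate situations---points where $f''$ vanishes, and the case where $\frac{f'}{f''}$ has no finite concave majorant---and checking that these are precisely the circumstances in which $g''\equiv0$; the verifications that $\varphi$ is continuous, that the $g$ constructed above lies in $\Cts(I)$, and the identification of $\QA{g}=\A$ with $g$ being affine are routine.
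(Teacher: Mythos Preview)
Your proof is correct and follows essentially the same route as the paper's: both reduce the problem, via Theorem~\ref{thm:QAconvex} and the Mikusi\'nski comparison criterion $\QA{h}\le\QA{f}\iff\frac{h''}{h'}\le\frac{f''}{f'}$, to computing the concave envelope $\varphi=\conc_{\mathcal{C}(I)}\big(\frac{f'}{f''}\big)$ and then recovering $g$ from the equation $\frac{g'}{g''}=\varphi$. Your treatment is a bit more explicit in handling the degenerate cases (where $f''$ vanishes or $\varphi\equiv+\infty$) and in writing the integral formula for $g$, but these are elaborations of the same argument rather than a different approach.
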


\begin{proof}
Let $P$ be the family of all strictly monotone, affine functions on $I$ and
denote 
\Eq{*}{
U:=\{ h \in \mathcal{C}(I) \colon \QA{h}\text{ is Jensen convex and }\QA{h}\le\QA{f}\}.
}
Then, by the previous theorem,
\Eq{*}{
U=\{ h \in \Cts(I) \colon \QA{h}\text{ is Jensen convex and }\QA{h}\le\QA{f}\}.
}

Obviously $P \subseteq U$ as $\A \le \QA{f}$ and the arithmetic mean is convex. Moreover, by the definition,
\Eq{*}{
\conv_{\calQ(I)}(\QA{f})(x)
&=\sup \{\QA{h}(x) \colon h \in U\} \qquad \text{ for all }x \in \bigcup_{n=1}^\infty I^n.
}
If $U=P$, then obviously $\QA{h}=\A$ for all $h \in U$ and $\conv_{\calQ(I)}(\QA{f})=\A$.
From now on assume that the set $U_0:=U \setminus P$ is nonempty. Then for every $h \in U_0$ we have $\QA{h} \ge \A$ and $\QA{h} \neq \A$. In particular,
\Eq{IneqU0}{
\conv_{\calQ(I)}(\QA{f})(x)
=\sup \{\QA{h}(x) \colon h \in U_0\} \qquad \text{ for all }x \in \bigcup_{n=1}^\infty I^n.
}

By virtue of Theorem~\ref{thm:QAconvex}, for all $h \in U_0$, we have $h \in \Cts(I)$, $h''$ is nowhere vanishing and $\frac{h''}{h'}$ is positive and concave. Moreover, applying well-known comparability criterion, we have $\frac{h''}{h'} \le \frac{f''}{f'}$ for all $h \in U_0$. In particular $\frac{f''}{f'}$ is positive on its domain.

On the other hand, it is relatively easy to verify each function $h\colon I \to \R$ satisfying all properties above belongs to $U_0$. Therefore
\Eq{*}{
U_0=\big\{h \in \Cts(I) \colon h''\text{ is nonvanishing, }\tfrac{h'}{h''}\text{ is concave, and }\tfrac{h'}{h''} \ge \tfrac{f'}{f''}\big\}.
}
Now define $m\colon I \to \R$ by $m:=\conc_{\mathcal{C}(I)}\big(\frac{f'}{f''}\big)$. By $m\ge \frac{f'}{f''}$ we know that $m$ is  positive. Thus the 2nd-order linear ordinary differential equation $\frac{g'}{g''}=m$ has a solution $g \in \Cts(I)$.

Obviously $g''$ is nowhere vanishing and $\frac{g'}{g''}$ is positive and concave. Thus Theorem~\ref{thm:QAconvex} implies that $\QA{g}$ is convex.

On the other hand, by the definition of the concave envelope for every $h \in U_0$, we have $\frac{h'}{h''} \ge \frac{g'}{g''}$ therefore
$\QA{h}\le \QA{g}$. Applying this inequality to all $h \in U_0$ in view of \eq{IneqU0} one gets $\conv_{\calQ(I)}(\QA{f})\le \QA{g}$. 

To verify the converse inequality, observe that $\frac{g''}{g'}=\frac1m\le \frac{f''}{f'}$ which implies $\QA{g}\le \QA{f}$. Thus $\QA{g}$ is a convex minorant of $\QA{f}$, equivalently $g \in U_0$. Applying the inequality \eq{IneqU0} we obtain $\conv_{\calQ(I)}(\QA{f})\ge \QA{g}$. 
\end{proof}

Using Lemma~\ref{lem:reflected} we can formulate the result concerning concave envelopes in a family of quasiarithmetic means. 

\begin{cor}
Let $I$ be an interval, $f \in \Cts(I)$ be an increasing and concave function. Then $\conc_{\calQ(I)}(\QA{f})=\QA{g}$ for some $g \in \Cts(I)$. 

Moreover either $g''\equiv 0$ (and $\QA{g}$ is the arithmetic mean) or $g''$ is nowhere vanishing and $\frac{g'}{g''}=\conv_{\mathcal{C}(I)}\big(\frac{f'}{f''}\big)$.
\end{cor}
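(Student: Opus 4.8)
The plan is to derive everything from the preceding theorem by conjugating it with the reflection operator of Lemma~\ref{lem:reflected}.

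\emph{Step 1: passing to the reflected generator.} Given $f\in\Cts(I)$ strictly increasing and concave, I would introduce $h\colon -I\to\R$ by $h(t):=-f(-t)$. From $h'(t)=f'(-t)$ and $h''(t)=-f''(-t)$ one reads off that $h\in\Cts(-I)$ and that $h$ is strictly increasing and convex; moreover a direct computation with the definition of $\QA{\cdot}$ (together with the identity $\QA{h}=\QA{-h}$) gives $\widehat{\QA f}=\QA h$. Thus $h$ satisfies the hypotheses of the preceding theorem on the interval $-I$.

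\emph{Step 2: applying the preceding theorem and reflecting back.} The preceding theorem, applied to $h$, produces $\tilde g\in\Cts(-I)$ with $\conv_{\calQ(-I)}(\QA h)=\QA{\tilde g}$, and with either $\tilde g''\equiv0$ or $\tilde g''$ nowhere vanishing and $\frac{\tilde g'}{\tilde g''}=\conc_{\mathcal C(-I)}\!\big(\frac{h'}{h''}\big)$. Now Lemma~\ref{lem:reflected}, used with $\S=\calQ(I)$ (so that $\widehat\S=\calQ(-I)$, since the excerpt records that reflected quasiarithmetic means on $I$ are precisely the quasiarithmetic means on $-I$) and $\M=\QA f$, together with $\widehat{\QA f}=\QA h$, yields $\widehat{\conc_{\calQ(I)}(\QA f)}=\conv_{\calQ(-I)}(\widehat{\QA f})=\conv_{\calQ(-I)}(\QA h)=\QA{\tilde g}$. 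Reflecting once more (and using that $\widehat{\widehat{\M}}=\M$), we get $\conc_{\calQ(I)}(\QA f)=\widehat{\QA{\tilde g}}=\QA g$, where $g\colon I\to\R$ is given by $g(t):=-\tilde g(-t)$ and clearly lies in $\Cts(I)$; moreover $g''\equiv0$ precisely when $\tilde g''\equiv0$, in which case $\QA g=\A$.

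\emph{Step 3: identifying $g'/g''$ in the non-degenerate case.} Here I would transport the identity $\frac{\tilde g'}{\tilde g''}=\conc_{\mathcal C(-I)}\!\big(\frac{h'}{h''}\big)$ back to $I$. Writing $\bar\psi(t):=-\psi(-t)$ for the reflection of a single-variable function $\psi$, the same argument as in Lemma~\ref{lem:reflected} (if $G\in\mathcal C(-I)$ is Jensen concave with $G\ge\bar\psi$, then $\bar G\in\mathcal C(I)$ is Jensen convex with $\bar G\le\psi$, and conversely) shows $\conc_{\mathcal C(-I)}(\bar\psi)=\overline{\conv_{\mathcal C(I)}(\psi)}$. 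Since $h(t)=-f(-t)$ gives $\frac{h'}{h''}=\overline{f'/f''}$ and $g(t)=-\tilde g(-t)$ gives $\frac{g'}{g''}=\overline{\tilde g'/\tilde g''}$, and since reflection is an involution, I conclude $\frac{g'}{g''}=\overline{\conc_{\mathcal C(-I)}\!\big(\overline{f'/f''}\big)}=\overline{\;\overline{\conv_{\mathcal C(I)}(f'/f'')}\;}=\conv_{\mathcal C(I)}\!\big(\tfrac{f'}{f''}\big)$. Finally $g''$ is nowhere vanishing, because $\frac{g'}{g''}\ge\frac{f'}{f''}>0$.

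The content here is essentially a matter of bookkeeping of signs and domains across the three reflections. The only point that has to be isolated as a small argument (rather than a substitution) is the one-variable analogue of Lemma~\ref{lem:reflected} invoked in Step~3, and I expect this to be the sole mild obstacle; no genuine difficulty is anticipated, since all the hard analytic work already resides in the preceding theorem.
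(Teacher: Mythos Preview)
Your approach is exactly the one the paper intends: the corollary is stated there without proof, merely as a consequence of Lemma~\ref{lem:reflected} applied to the preceding theorem, and you have carried out precisely that reflection argument in full detail. The only slip is the throwaway final clause of Step~3 (since $f$ is increasing and concave, $f'/f''$ is negative, and the convex envelope lies \emph{below} the function, not above); the correct justification that $g''$ is nowhere vanishing is simply that $\tilde g''$ is, which you already obtained from the preceding theorem in Step~2.
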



\end{document}